\newtheorem{theorem}[]{Theorem} 
\newtheorem{Conj}[theorem]{Conjecture}
\newtheorem{lemma}[theorem]{Lemma}
\newtheorem{corollary}[theorem]{Corollary}
\newtheorem{proposition}[theorem]{Proposition}
\newtheorem{remark}[theorem]{Remark}
\begin{document}
\title{\textbf{\mathversion{bold} 
On a conjecture of Moreno-Fr\'{i}as and Rosales for numerical semigroups}}
\author{Masahiro WATARI}
\date{}
\maketitle

\begin{abstract}
The present paper addresses a semimodule counting conjecture of Moreno-Fr\'{i}as and Rosales for numerical semigroups. 
Applying Pfister and Steenbrink's Theory for punctual Hilbert schemes of curve singularities, we show that this conjecture is true for any numerical semigroup.
\end{abstract}
\noindent
\textbf{Keywords}: numerical semigroups,  curve singularities, punctual Hilbert schemes\\
\textbf{Mathematics Subject Classification (2020):}20M14, 14C05, 14H20

\begin{center}
\textit{Dedicated to Professor Fumio Sakai on the occasion of his 75th birthday}
\end{center}
\section{Introduction} 

Let $\mathbb{N}$ be the set of natural numbers. 
We put $\mathbb{N}_0:=\mathbb{N}\cup\{0\}$. 
A nonempty subset $S$ of  $\mathbb{N}_0$ is called a \emph{numerical semigroup}, if it satisfies the following three condtions:
(1) $S$ is closed under addition, (2) $0\in S$, (3) $\# (\mathbb{N}_0\setminus S) < \infty$. 
For a numerical semigroup $S$, we define its \emph{Frobenius number} $F(S)$ to be the biggest gap of $S$ 
(i.e. $F(S):=\max\{n\in\mathbb{N}|\,n\notin S\}$).
We also define its
$\emph{conductor}$ and \emph{genus} to be
$c:=c(S):=F(S)+1$ and $g(S):=\# (\mathbb{N}_0\setminus S)$ respectively.

For a nonempty subset $A$ of $\mathbb{N}_0$, we set 
$\langle A\rangle:=\{\Sigma_{i=1}^n\lambda_ia_i|\,n\in \mathbb{N},\, a_i\in A, \lambda_i\in \mathbb{N}_0\}$.   
This is the  submonoid of $(\mathbb{N}_0,+)$ generated by $A$. 
In particular, we  write $\langle A\rangle=\langle a_1, \ldots, a_p \rangle$ for $A=\{a_1, \ldots, a_p\}$. 
It is known that $\langle A\rangle$ is a numerical semigroup if and only if $\gcd(A)=1$ (see \cite{Rosales and Garcia}, Lemma\,2.1).
We call $A$ a \emph{system of generators} of a numerical semigroup $S$, if $S= \langle A\rangle$. 
In the present paper, we always assume that the system of generators of $S$ is \emph{minimal} 
(i.e. $S\neq \langle B\rangle$ holds for any proper subset $B\subsetneq A$).  

For a numerical semigroup $S$, a subset $\Delta\subseteq S$ is called a \emph{$S$-semimodule}, if the relation $\Delta+S\subseteq \Delta$ holds.   
In the context of semigroup theory, $S$-semimodules are called ideals of $S$ and have been studied extensively 
(cf. \cite{AJ, R,RGG}).
We denoted by $\mathrm{Mod}(S)$ the set of all $S$-semimodules. 
For $\Delta\in\mathrm{Mod}(S)$, its \emph{codimension} is defined by $\mathrm{codim}(\Delta):=\#(S\setminus \Delta)$.
For $r\in\mathbb{N}_0$, we set $\mathrm{Mod}_r(S):=\{\Delta\in\mathrm{Mod}(S)\,|\,\mathrm{codim}(\Delta)=r\}$. 
We write $\Delta=\langle \alpha_1,\cdots,\alpha_q\rangle_{S}$ for a $S$-semimodule $\Delta$ which is generated by $\{\alpha_1,\cdots,\alpha_q\}$ 
(i.e $\Delta=\cup_{i=1}^q\{\alpha_i+S\}$). 
Similar to numerical semigroups, we call the set $\{\alpha_1,\cdots,\alpha_q\}$ a \emph{system of generators} of $\Delta$. 
A system of generators of $\Delta$ is said to be \emph{minimal}, if  the relations $\Delta\supsetneq \cup_{i=1,i\neq l}^q\{\alpha_i+S\}$ hold for all $l\in\{1,\ldots,q\}$. 
Note that if $\Delta$ is a $S$-semimodule, then $\widetilde{\Delta}:=\Delta\cup \{0\}$ is a numerical semigroup. 
We call $\widetilde{\Delta}$ the \emph{numerical semigroup associated with} $\Delta$.  
For $r\in\mathbb{N}_0$, we put $\mathcal{J}(S,r):=\{\widetilde{\Delta}\,|\,\Delta\in \mathrm{Mod}(S) \text{ and }g(\widetilde{\Delta})-g(S)=r\}$. 
The following conjecture was considered by Moreno-Fr\'{i}as and Rosales in \cite{Moreno-Frias and Rosales}.

\begin{Conj}{\rm (\cite{Moreno-Frias and Rosales})}\label{Main conjecture}
Let $S$ be a numerical semigroup. 
There exists a positive integer $n_S$ such that if relations $0\le a< b \le n_S$ hold, then  $\# \mathcal{J}(S,a)\le \# \mathcal{J}(S,b)$ and $\# \mathcal{J}(S,n)= \# \mathcal{J}(S,n_S)$ hold for any positive integer $n$ with $n\ge n_S$.
\end{Conj}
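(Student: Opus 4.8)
The plan is to recast Conjecture~\ref{Main conjecture} as a monotonicity-and-stabilisation statement for the function $r\mapsto\#\mathrm{Mod}_r(S)$, and then to prove that statement through the punctual Hilbert scheme of the monomial curve of $S$. For the reduction: the assignment $\Delta\mapsto\widetilde\Delta$ is injective on $\mathrm{Mod}(S)$, and since $0\in\Delta$ forces $\Delta=S$, a short computation with gap sets gives $g(\widetilde\Delta)-g(S)=\mathrm{codim}(\Delta)-1$ whenever $\Delta\neq S$. One checks directly that $S\setminus\{0\}$ is the unique $S$-semimodule of codimension $1$ and that $\widetilde{S\setminus\{0\}}=S$; hence $\#\mathcal J(S,r)=\#\mathrm{Mod}_{r+1}(S)$ for every $r\ge 0$. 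As $\#\mathrm{Mod}_0(S)=\#\mathrm{Mod}_1(S)=1$, Conjecture~\ref{Main conjecture} is thus equivalent to: the sequence $(\#\mathrm{Mod}_r(S))_{r\ge 0}$ is non-decreasing and eventually constant (and then $n_S$ may be taken to be one less than the index from which it is constant). It is convenient to note that, by complementation inside $S$, $\mathrm{Mod}_r(S)$ is in bijection with the set of order ideals (down-sets) of cardinality $r$ of the poset $(S,\le_S)$, where $a\le_S b$ means $b-a\in S$.

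\emph{The Hilbert scheme.} Let $R=k[[t^{s}:s\in S]]\subset k[[t]]$ be the complete local ring of the monomial curve singularity of $S$ over an algebraically closed field $k$, with its $\mathbb G_m$-action $t\mapsto\lambda t$. For a nonzero ideal $I\subseteq R$ put $v(I)=\{\mathrm{ord}_t(x):x\in I\setminus\{0\}\}$; then $v(I)\in\mathrm{Mod}(S)$, $\dim_k R/I=\#(S\setminus v(I))$, and $I$ is $\mathbb G_m$-fixed precisely when it is monomial, in which case it equals the monomial ideal $I_{v(I)}$. Hence the $\mathbb G_m$-fixed locus of the punctual Hilbert scheme $H_r:=\mathrm{Hilb}^r(R)$ is identified with $\mathrm{Mod}_r(S)$. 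Applying Pfister and Steenbrink's theory, $H_r$ is a connected projective scheme, and the stratification of $H_r$ by the value of $v(I)$ — which is nothing but the Bialynicki--Birula decomposition attached to the $\mathbb G_m$-action — is a decomposition into affine spaces $H_r=\bigsqcup_{\Delta\in\mathrm{Mod}_r(S)}C_\Delta$ with $C_\Delta\cong\mathbb A^{d(\Delta)}$, together with an explicit formula for the dimension $d(\Delta)$ in terms of the minimal generators of $\Delta$. Therefore $\#\mathrm{Mod}_r(S)$ equals the Euler characteristic $e(H_r)$, and the point-count $\#H_r(\mathbb F_q)=\sum_{\Delta\in\mathrm{Mod}_r(S)}q^{\,d(\Delta)}$ is a polynomial in $q$ with non-negative coefficients.

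\emph{The generating series.} The core of the argument is to prove
\[
\sum_{r\ge 0}\#H_r(\mathbb F_q)\,t^{r}=\frac{P_S(q,t)}{1-t},\qquad P_S(q,t)\in\mathbb Z_{\ge0}[q,t].
\]
That the right-hand side has denominator $1-t$ and a polynomial numerator is the stabilisation $\#H_r(\mathbb F_q)=\#H_{r+1}(\mathbb F_q)$ for $r\gg0$ (as polynomials in $q$): this follows from Pfister and Steenbrink's description of $H_r$ in large colength, or, elementarily, from the fact that once $r$ is large an order ideal of $(S,\le_S)$ of cardinality $r$ must be an initial segment of $S$ together with a frontier of bounded combinatorial type confined to an interval of length $c(S)$, so that both the number of cells and the multiset of their dimensions cease to depend on $r$. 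The non-negativity of $P_S$ is exactly the statement that the polynomials $\#H_r(\mathbb F_q)$ are coefficientwise non-decreasing in $r$, which I would deduce from a dimension-preserving injection $\iota_r\colon\mathrm{Mod}_{r-1}(S)\hookrightarrow\mathrm{Mod}_r(S)$ (meaning $d(\iota_r(\Delta))=d(\Delta)$): for then $\#H_r(\mathbb F_q)-\#H_{r-1}(\mathbb F_q)=\sum_{\Delta\notin\mathrm{im}\,\iota_r}q^{\,d(\Delta)}$ has non-negative coefficients. Specialising $q\to 1$ then yields $\sum_{r\ge0}\#\mathrm{Mod}_r(S)\,t^{r}=P_S(1,t)/(1-t)$ with $P_S(1,t)\in\mathbb Z_{\ge0}[t]$, so $\#\mathrm{Mod}_r(S)-\#\mathrm{Mod}_{r-1}(S)=[t^{r}]P_S(1,t)\ge 0$ for all $r$ and is $0$ for $r>\deg_t P_S(1,t)$; together with the Reduction this proves the conjecture.

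\emph{Main obstacle.} The hard part is the non-negativity of $P_S$, i.e.\ the construction of the dimension-preserving injection $\mathrm{Mod}_{r-1}(S)\hookrightarrow\mathrm{Mod}_r(S)$. The naive candidates — adjoining to $S\setminus\Delta$ a $\le_S$-minimal element of $\Delta$ chosen by a uniform rule (the smallest, or the largest, such element) — already fail to be injective for $S=\langle 3,4,5\rangle$, so the injection has to be extracted from Pfister and Steenbrink's fine description of the cells $C_\Delta$ and the dimension function $d$. In particular no purely combinatorial property of the poset $(S,\le_S)$ can suffice, since for a general finite poset the number of order ideals of a given cardinality need not even be unimodal (an antichain already fails it); it is the geometry of $H_r$ that forces the monotonicity. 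The eventual-constancy half, by comparison, is soft.
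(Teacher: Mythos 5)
Your reduction of the conjecture to the assertion that $r\mapsto\#\mathrm{Mod}_r(S)$ is non-decreasing and eventually constant coincides with the paper's, and your treatment of the eventual constancy (Pfister--Steenbrink in large colength) is in substance the paper's argument: Corollary~\ref{stable} combined with the stratification of Proposition~\ref{decomposition lemma} and the non-emptiness of the strata for the monomial curve $\mathbb{C}[[S]]$ (Lemma~\ref{not empty}). The genuine gap is in the monotonicity half, which is precisely the step you label the ``main obstacle'' and then leave undone: the dimension-preserving injection $\mathrm{Mod}_{r-1}(S)\hookrightarrow\mathrm{Mod}_r(S)$ is never constructed, so the non-negativity of $P_S$, and with it the inequality $\#\mathcal{J}(S,a)\le\#\mathcal{J}(S,b)$, remains unproved. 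Moreover, the geometric input you propose to extract it from is not available in the cited source: Pfister and Steenbrink give the stratification of $\mathrm{Hilb}^r(X,o)$ by the order semimodules $S(I)$ and the $\delta$-normalized embedding, but no decomposition of each stratum into an affine space $\mathbb{A}^{d(\Delta)}$ with an explicit dimension formula; such cell structures are known only in special cases (Piontkowski's short list of plane singularities), so already the identity $\#H_r(\mathbb{F}_q)=\sum_{\Delta}q^{d(\Delta)}$ on which your generating series rests is unsupported.

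Your claim that ``no purely combinatorial property of the poset $(S,\le_S)$ can suffice'' is also where you misjudge the situation: $(S,\le_S)$ is far from an arbitrary finite poset, and the paper proves $\#\mathrm{Mod}_r(S)\le\#\mathrm{Mod}_{r+1}(S)$ by a purely combinatorial argument (Lemma~\ref{proposition r r+1} and Proposition~\ref{monotonically increase}), passing from $\Delta\in\mathrm{Mod}_r(S)$ to the semimodules $\Delta\setminus\{\alpha\}$ obtained by deleting a minimal generator and comparing the families produced by distinct $\Delta$'s; geometry enters only for the stabilisation at $r\ge c$. Note also that the conjecture only needs some injection $\mathrm{Mod}_{r-1}(S)\hookrightarrow\mathrm{Mod}_r(S)$, or any other proof of the cardinality inequality; insisting that it preserve the cell dimension $d(\Delta)$, so as to obtain coefficientwise monotonicity of $\mathbb{F}_q$-point counts before specialising $q\to1$, makes your key step strictly harder than what is required while the specialisation adds nothing. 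As it stands, the proposal settles the softer stabilisation half and leaves the monotonicity half as an acknowledged open step.
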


Moreno-Fr\'{i}as and Rosales proved this conjecture for the numerical semigroup generated by $2$ and $2k+1$ in \cite{Moreno-Frias and Rosales}. 
Chavan also argued this conjecture in \cite{Chaven}. 
In the present paper, we prove the following theorem:

\begin{theorem}\label{Main theorem}
For any numerical semigroup $S$, Conjecture\,{\rm \ref{Main conjecture}} is true with $n_S=F(S)$. 
\end{theorem}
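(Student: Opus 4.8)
The plan is to reduce Theorem~\ref{Main theorem} to a counting statement about the sets $\mathrm{Mod}_n(S)$, and then to read that statement off from the geometry of the punctual Hilbert schemes of the associated monomial curve, using Pfister--Steenbrink's theory. For the reduction: since $\widetilde{\Delta}$ is a numerical semigroup for every $\Delta\in\mathrm{Mod}(S)$ and $g(\widetilde{\Delta})-g(S)=\#(S\setminus\widetilde{\Delta})$, a direct computation gives $g(\widetilde{\Delta})-g(S)=\mathrm{codim}(\Delta)-1$ whenever $\Delta\neq S$ (and $\widetilde{S}=S$ contributes only to $\mathcal{J}(S,0)$). Hence $\Delta\mapsto\widetilde{\Delta}$ restricts to a bijection $\mathrm{Mod}_{r+1}(S)\to\mathcal{J}(S,r)$ for each $r\ge 0$: it is injective because every semimodule of positive codimension omits $0$, and surjective with inverse $T\mapsto T\setminus\{0\}$, which is an $S$-semimodule of codimension $r+1$. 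Thus $\#\mathcal{J}(S,r)=\#\mathrm{Mod}_{r+1}(S)$, and with $c=c(S)=F(S)+1$ the theorem becomes equivalent to the two assertions: (a) $\#\mathrm{Mod}_m(S)\le\#\mathrm{Mod}_{m+1}(S)$ for all $m\ge 1$; and (b) $\#\mathrm{Mod}_m(S)=\#\mathrm{Mod}_c(S)$ for all $m\ge c$.

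Next I would pass to geometry. Over a field $k$, let $R=k[[\,t^{s}:s\in S\,]]\subset k[[t]]$ be the local ring of the monomial curve singularity $C_S$. The codimension-$n$ $S$-semimodules are exactly the monomial (equivalently $\mathbb{G}_m$-fixed) ideals of colength $n$ in $R$, i.e. the fixed points of the torus action on the punctual Hilbert scheme $\mathrm{Hilb}^n(C_S)$; by Pfister--Steenbrink's theory this action induces a Bia{\l}ynicki--Birula decomposition of $\mathrm{Hilb}^n(C_S)$ into affine cells indexed by $\mathrm{Mod}_n(S)$, each with an explicit combinatorial dimension. In particular $\#\mathrm{Mod}_n(S)=\chi(\mathrm{Hilb}^n(C_S))$, the generating series $\mathcal{Z}_S(q):=\sum_{n\ge 0}\#\mathrm{Mod}_n(S)\,q^{n}=\sum_{\Delta\in\mathrm{Mod}(S)}q^{\mathrm{codim}(\Delta)}$ is the Euler-characteristic Hilbert zeta function of $C_S$, and assertions (a) and (b) together say precisely that
\[
(1-q)\,\mathcal{Z}_S(q)=N_S(q)\in\mathbb{Z}_{\ge 0}[q],\qquad \deg N_S\le c .
\]

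For (b) (the degree bound) the key point is that, since $\Delta\subseteq S$ and $F(S)\notin S$, every $\Delta\in\mathrm{Mod}(S)$ satisfies $\{h,h+1,h+2,\dots\}\subseteq\Delta$ only for $h\ge c$; so once the codimension exceeds $c$ every ideal in sight already contains a fixed power of the normalization's maximal ideal, and Pfister--Steenbrink's analysis of $\mathrm{Hilb}^n(C_S)$ in this stable range provides a bijection $\mathrm{Mod}_n(S)\to\mathrm{Mod}_{n+1}(S)$ for $n\ge c$ obtained by sliding this threshold by one. This gives (b), hence the stabilization part of Conjecture~\ref{Main conjecture} with $n_S=F(S)$.

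The main obstacle is (a), the monotonicity $\#\mathrm{Mod}_m(S)\le\#\mathrm{Mod}_{m+1}(S)$, equivalently the nonnegativity of the coefficients of $(1-q)\mathcal{Z}_S(q)$. The naive attempt --- deleting a minimal generator of $\Delta$ to land in $\mathrm{Mod}_{m+1}(S)$ --- is well defined but not injective: removing $\min\Delta$ already identifies distinct semimodules in examples as small as $S=\langle 3,4,5\rangle$, and removing the largest generator fails in the same way. What is required is an honest injection $\mathrm{Mod}_m(S)\hookrightarrow\mathrm{Mod}_{m+1}(S)$ manufactured from the Pfister--Steenbrink cell data --- organizing the cells of the $\mathrm{Hilb}^n(C_S)$ by the codimension of their semimodule and matching each with one a step higher, compatibly with the threshold-slide used for (b). Establishing that such a matching exists is the step that genuinely uses the cited theory, and is where I expect the real work to lie.
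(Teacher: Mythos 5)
Your reduction $\#\mathcal{J}(S,r)=\#\mathrm{Mod}_{r+1}(S)$ is correct and is exactly how the paper begins, but the proposal then stops short of the theorem: assertion (a), the monotonicity $\#\mathrm{Mod}_m(S)\le\#\mathrm{Mod}_{m+1}(S)$, is not proved --- you explicitly defer it to an unconstructed ``matching manufactured from the Pfister--Steenbrink cell data.'' This is the main gap, and it is also where your expectation about the structure of the argument diverges from the paper: the paper proves monotonicity purely combinatorially, with no geometry at all. Its Lemma~\ref{proposition r r+1} shows that removing any minimal generator of $\Delta\in\mathrm{Mod}_r(S)$ lands in $\mathrm{Mod}_{r+1}(S)$ and that every element of $\mathrm{Mod}_{r+1}(S)$ arises this way (by re-inserting $\max(S\setminus\Delta)$), and Proposition~\ref{monotonically increase} then compares, for distinct $\Delta_{rk},\Delta_{rl}\in\mathrm{Mod}_r(S)$, the full families $\{\Delta_{rk}\setminus\{\alpha_{kj}\}\}_j$ and $\{\Delta_{rl}\setminus\{\alpha_{lj}\}\}_j$ of ``children,'' using minimality of the generating sets to show these families are distinct, and deduces $\#\mathrm{Mod}_{r+1}(S)\ge\#\mathrm{Mod}_r(S)$. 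Your observation that a \emph{single canonical} removal (the minimum, or the largest generator) is not injective is correct --- e.g.\ for $S=\langle 3,4,5\rangle$ the semimodules $\{3\}\cup\{n\ge 6\}$, $\{4\}\cup\{n\ge 6\}$, $\{5\}\cup\{n\ge 6\}$ all collapse onto $\{n\ge 6\}\cap S$ --- and this is a legitimate caution (indeed it is precisely the delicate point in passing from distinctness of child families to the cardinality bound), but it does not justify the conclusion that the combinatorial route fails and that Hilbert-scheme cell data are needed; in the paper the Hilbert scheme plays no role in (a).

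For assertion (b) you are in the same spirit as the paper but the argument is only gestured at: the ``threshold-slide bijection $\mathrm{Mod}_n(S)\to\mathrm{Mod}_{n+1}(S)$ for $n\ge c$'' is never constructed, and the identification $\#\mathrm{Mod}_n(S)=\chi(\mathrm{Hilb}^n(X,o))$ via a Bia{\l}ynicki--Birula decomposition into affine cells is both unproved and stronger than what is needed (and it silently requires that every stratum $\mathcal{I}(\Delta)$ be nonempty). The paper's actual route is shorter: for the monomial curve with $\mathcal{O}=\mathbb{C}[[S]]$, the $\delta$-normalized embedding $\varphi_r$ is a bijection onto the Jacobian factor $J$ for all $r\ge c$ (Proposition~\ref{thm3}, Corollary~\ref{stable}); combining the stratification $\mathrm{Hilb}^r(X,o)=\bigsqcup_{\Delta\in\mathrm{Mod}_r(S)}\mathcal{I}(\Delta)$ of Proposition~\ref{decomposition lemma} with the nonemptiness of every $\mathcal{I}(\Delta)$ for monomial singularities (Lemma~\ref{not empty}, which fails for general singularities, e.g.\ Puiseux type $(4,2q,s)$), one gets a bijection between the index sets $\mathrm{Mod}_r(S)$ and $\mathrm{Mod}_c(S)$, hence $\#\mathrm{Mod}_r(S)=\#\mathrm{Mod}_c(S)$ for $r\ge c$, i.e.\ stabilization at $n_S=F(S)$. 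To complete your proposal you would need to supply an actual proof of (a) (the paper's elementary generator-removal comparison, or a genuine injection/matching argument) and to replace the sketch of (b) by the stability-plus-stratification argument above.
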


This paper is laid out as follows:
In Section\,\ref{preliminaries}, we briefly recall Pfister and Steenbrink's theory for punctual Hilbert schemes of curve singularities. 
Applying the properties of punctual Hilbert schemes, 
we prove Theorem\,\ref{Main theorem} in Section\,\ref{proof}.

\section{Preliminaries}\label{preliminaries}
In this section, we briefly summarize basic notions and known facts about punctual Hilbert schemes for irreducible curve singularities. 
Our main reference is \cite{PS}. 
Let $\mathbb{C}$ be the field of complex numbers. 
For a reduced and irreducible singular curve $X$ over $\mathbb{C}$ and its singular point $o$, we refer to the pair $(X,o)$ as an \emph{irreducible curve singularity}. 
Let $\mathcal{O}=\mathcal{O}_{X,o}$ be the complete local ring of the irreducible curve singularity $(X,o)$. 
For an irreducible curve singularity $(X,o)$, we define the \emph{punctual Hilbert scheme} $\mathrm{Hilb}^r (X,o)$ of degree $r$ to be the set of length $r$ subschemes of $X$ supported on $o$.
Note that the structure of $\mathrm{Hilb}^r (X,o)$ depends only on the completed local ring $\mathcal{O}$. 
There is a natural identification
$$
\mathrm{Hilb}^r (X,o)=\{I\,|\,\text{$I$ is an ideal of $\mathcal{O}$ with $\dim_{\mathbb{C}}\mathcal{O}/I=r$}\}.
$$ 
Below, we always consider an irreducible curve singularity $(X,o)$ with $\mathcal{O}=\mathbb{C}[[t^{a_1},\cdots,t^{a_p}]]$ for some positive integers $a_1,\ldots,a_p$. 
Such a curve singularity is called a \emph{monomial curve singularity}.  
Without loss of generality, we may assume that $\gcd(a_1,\ldots,a_p)=1$. 
Note that the normalization $\overline{\mathcal{O}}$ of $\mathcal{O}$ is isomorphic to $\mathbb{C}[[t]]$. 
The set $\{\text{ord}_t(f)\,|\,f\in \mathcal{O}\}$ is a numerical semigroup. 
It is called the \emph{semigroup} of $\mathcal{O}$. 
The positive integer $\delta:=\dim _{\mathbb{C}}(\overline{\mathcal{O}}/\mathcal{O})$ is called the \emph{$\delta$-invariant} of $\mathcal{O}$.  

For a positive integer  $a$, we denote by $(t^a)$ the ideal of $\overline{\mathcal{O}}$ generated by $t^a$. 
Let $\mathrm{Gr}\left(\delta,\overline{\mathcal{O}}/(t^{2\delta})\right)$ be the Grassmannian which consists of $\delta$-dimensional linear subspaces of $\overline{\mathcal{O}}/(t^{2\delta})$.
For $M\in\mathrm{Gr}\left(\delta,\overline{\mathcal{O}}/(t^{2\delta})\right)$, we define a multiplication by $\overline{\mathcal{O}}\times M\ni (f,m+(t^{2\delta}))\mapsto fm+(t^{2\delta})\in M$. 
Set
$$
J:=\left\{M\in \mathrm{Gr}\left(\delta,\overline{\mathcal{O}}/(t^{2\delta})\right)\big|\,\text{$M$ is an $\mathcal{O}$-submodule w.r.t. the above multiplication}\right\}.
$$
This set $J$ is called the \emph{Jacobian factor} of the singularity $(X,o)$. 
It was introduced by Rego in \cite{Re}. 
Piontkowski  \cite{P} described the homology of the Jacobian factors for the irreducible plane curve singularities whose Puiseux characteristics are $(p,q)$, $(4,2q,s)$, $(6,8,s)$ and $(6,10,s)$.

In \cite{PS}, Pfister and Steenbrink defined a map 
\begin{equation}\label{phai}
\varphi_r: \mathrm{Hilb}^r (X,o)\rightarrow J\subset \mathrm{Gr}(\delta, \overline{\mathcal{O}}/(t^{2\delta}))
\end{equation}
by $\varphi _r (I)=t^{-r}I/(t^{2\delta})$. 
We call $\varphi _r$ the \emph{$\delta$-normalized embedding}. 
It has the following properties:

\begin{proposition}[\cite{PS}, Theorem\,3]\label{thm3}
Let $S$ be the semigroup of $\mathcal{O}$.  
The $\delta$-normalized embedding $\varphi_r$  is injective for $r\in\mathbb{N}_0$. 
Furthermore, it is bijective for $r\ge c$. 
The image $\varphi _r (\mathrm{Hilb}^r (X,o))$  is Zariski closed in $J$. 
\end{proposition}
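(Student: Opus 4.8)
The plan is to reduce the statement to the combinatorics of value semigroups and to add a small amount of scheme theory at the end. For a finitely generated $\mathcal{O}$-submodule $N\subseteq\overline{\mathcal{O}}$ put $\Gamma(N):=\{\mathrm{ord}_t(f)\mid f\in N\setminus\{0\}\}\subseteq\mathbb{N}_0$; it satisfies $\Gamma(N)+S\subseteq\Gamma(N)$, and passing to associated graded modules for the $t$-adic filtration yields the dictionary $\dim_{\mathbb{C}}\overline{\mathcal{O}}/N=\#(\mathbb{N}_0\setminus\Gamma(N))$. For $I\in\mathrm{Hilb}^r(X,o)$ we have $\Gamma(I)\subseteq S$, so the dictionary gives $\#(S\setminus\Gamma(I))=\dim_{\mathbb{C}}\overline{\mathcal{O}}/I-\delta=\dim_{\mathbb{C}}\mathcal{O}/I=r$. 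Writing $m:=\min\Gamma(I)\in S$, we have $m+S\subseteq\Gamma(I)$, and the principal ideal $(t^{m})\subseteq\mathcal{O}$ has colength $\#(S\setminus(m+S))=m$; hence $r=\#(S\setminus\Gamma(I))\le m$, so every element of $I$ has $t$-order $\ge r$ and $\varphi_r(I)=t^{-r}I/(t^{2\delta})$ makes sense.

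The first real step is to verify that $\varphi_r$ lands in $J$, i.e. that $t^{-r}I/(t^{2\delta})$ is a $\delta$-dimensional $\mathcal{O}$-submodule. From $I\subseteq t^{r}\overline{\mathcal{O}}$, multiplication by $t^{r}$ identifies $\overline{\mathcal{O}}/(t^{-r}I)$ with $t^{r}\overline{\mathcal{O}}/I$, so $\dim_{\mathbb{C}}\overline{\mathcal{O}}/(t^{-r}I)=\dim_{\mathbb{C}}\overline{\mathcal{O}}/I-r=(r+\delta)-r=\delta$. The point that needs genuine work is the inclusion $(t^{2\delta})\subseteq t^{-r}I$. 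Put $\Delta:=\Gamma(t^{-r}I)=\Gamma(I)-r$, an $S$-semimodule in $\mathbb{N}_0$ with $\#(\mathbb{N}_0\setminus\Delta)=\delta$, and write $\Delta=\mu_0+\Delta_0$ with $\mu_0:=\min\Delta$. Then $\Delta_0\supseteq 0+S=S$, so $\mathbb{N}_0\setminus\Delta_0$ consists of $\delta-\mu_0$ gaps of $S$, and it is closed under subtraction of elements of $S$ (if $y\notin\Delta_0$ and $s\in S$ with $s\le y$, then $y-s\notin\Delta_0$, lest $y=(y-s)+s\in\Delta_0$). If $\Delta_0=\mathbb{N}_0$ the conductor of $\Delta$ is $\mu_0\le\delta$ and we are done; otherwise let $x$ be the largest element of $\mathbb{N}_0\setminus\Delta_0$, and applying the closure property to $x$ produces the $\#(S\cap[0,x])\ge x+1-\delta$ distinct elements $x-s$ ($s\in S$, $0\le s\le x$) of $\mathbb{N}_0\setminus\Delta_0$; hence $\delta-\mu_0\ge x+1-\delta$, so the conductor $\mu_0+(x+1)$ of $\Delta$ is $\le 2\delta$. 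This is exactly $(t^{2\delta})\subseteq t^{-r}I$. Finally $t^{-r}I$ is an $\mathcal{O}$-submodule of $\overline{\mathcal{O}}$ because $I$ is an ideal, so $\varphi_r(I)\in J$.

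Injectivity now follows at once: $\varphi_r(I)=\varphi_r(I')$ means $I+(t^{r+2\delta})=I'+(t^{r+2\delta})$ inside $\overline{\mathcal{O}}$, and since $(t^{r+2\delta})\subseteq I\cap I'$ this forces $I=I'$. For surjectivity when $r\ge c$, given $M\in J$ let $\widetilde{M}\subseteq\overline{\mathcal{O}}$ be its preimage, an $\mathcal{O}$-submodule with $(t^{2\delta})\subseteq\widetilde{M}$ and $\dim_{\mathbb{C}}\overline{\mathcal{O}}/\widetilde{M}=\delta$, and set $I:=t^{r}\widetilde{M}$. Since every integer $\ge c$ lies in $S$ we have $I\subseteq t^{r}\overline{\mathcal{O}}=(t^{r})\subseteq\mathcal{O}$, so $I$ is an ideal of $\mathcal{O}$; multiplication by $t^{r}$ gives $\dim_{\mathbb{C}}\mathcal{O}/I=\dim_{\mathbb{C}}\overline{\mathcal{O}}/I-\delta=(\dim_{\mathbb{C}}\overline{\mathcal{O}}/\widetilde{M}+r)-\delta=r$, and $\varphi_r(I)=t^{-r}I/(t^{2\delta})=\widetilde{M}/(t^{2\delta})=M$; hence $\varphi_r$ is bijective for $r\ge c$.

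For the closedness of the image I would argue geometrically. By the inclusion $(t^{r+2\delta})\subseteq I$ just established, every $I\in\mathrm{Hilb}^r(X,o)$ is determined by the subspace $I/(t^{r+2\delta})$ of the finite-dimensional $\mathbb{C}$-algebra $\mathcal{O}/(t^{r+2\delta})$; the locus of subspaces that are ideals (equivalently, stable under each $t^{a_i}$) of the prescribed codimension is Zariski closed in a Grassmannian, so $\mathrm{Hilb}^r(X,o)$ is projective. Similarly $J$ is cut out in $\mathrm{Gr}(\delta,\overline{\mathcal{O}}/(t^{2\delta}))$ by the closed conditions $t^{a_i}M\subseteq M$, and in these coordinates $\varphi_r$ is given by linear-algebra operations (multiply a subspace by $t^{-r}$, reduce modulo $(t^{2\delta})$), hence is a morphism; the image of a morphism with projective source is Zariski closed, so $\varphi_r(\mathrm{Hilb}^r(X,o))$ is closed in $J$. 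The semigroup bookkeeping is routine; the main obstacles are the conductor bound $(t^{2\delta})\subseteq t^{-r}I$ — that is, the counting argument using that $\mathbb{N}_0\setminus\Delta_0$ is closed under subtraction by $S$ — and the properness of $\mathrm{Hilb}^r(X,o)$ together with the algebraicity of $\varphi_r$, which is where the technical substance of \cite{PS} resides.
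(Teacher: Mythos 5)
Note first that the paper does not prove this proposition at all: it is quoted verbatim from Pfister--Steenbrink (\cite{PS}, Theorem\,3), so there is no in-paper argument to compare against, and your attempt has to be judged against the cited source's standard line of proof. Judged that way, your reconstruction is essentially correct and follows the same route as \cite{PS}: the valuation dictionary $\dim_{\mathbb{C}}\overline{\mathcal{O}}/N=\#(\mathbb{N}_0\setminus\Gamma(N))$, the bound $\min\Gamma(I)\ge r$, the conductor estimate for a codimension-$\delta$ semimodule giving $(t^{2\delta})\subseteq t^{-r}I$ (your counting argument with $\mathbb{N}_0\setminus\Delta_0$ closed under subtraction of elements of $S$ is correct and gives $\mu_0+x+1\le 2\delta$), injectivity from $(t^{2\delta})\subseteq t^{-r}I\cap t^{-r}I'$, surjectivity for $r\ge c$ via $I:=t^{r}\widetilde{M}\subseteq(t^{r})\subseteq\mathcal{O}$, and closedness of the image from properness of $\mathrm{Hilb}^r(X,o)$ (embedded in a Grassmannian of $\mathcal{O}/(t^{r+2\delta})$) together with algebraicity of $\varphi_r$.

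Two points are stated more strongly than you have actually argued them, though neither is fatal. First, the sentence ``This is exactly $(t^{2\delta})\subseteq t^{-r}I$'' conflates a statement about value sets with a statement about modules: knowing that $\Gamma(t^{-r}I)$ contains every integer $\ge 2\delta$ yields $t^{2\delta}\overline{\mathcal{O}}\subseteq t^{-r}I$ only after a successive-approximation argument (match leading terms and sum, using that $t^{-r}I$ is a finitely generated module over the complete ring $\mathcal{O}$, hence $t$-adically closed in $\overline{\mathcal{O}}$); the same completeness input is hidden in your opening dictionary $\dim_{\mathbb{C}}\overline{\mathcal{O}}/N=\#(\mathbb{N}_0\setminus\Gamma(N))$. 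This should be said explicitly, since it is the only place where the analytic (complete) structure of $\mathcal{O}$ enters. Second, your closedness argument phrases stability as ``stable under each $t^{a_i}$,'' which is fine in the monomial setting this paper works in, but the original theorem of \cite{PS} concerns arbitrary irreducible curve singularities; there one should say ``stable under a finite set of (topological) generators of $\mathcal{O}$,'' which costs nothing. With those emendations your proof is complete and is, in substance, the proof the paper is importing.
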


\begin{corollary}\label{stable}
If $r\ge c$, then we have  $\varphi_r(\mathrm{Hilb}^r (X,o))\cong\varphi_c(\mathrm{Hilb}^{c}(X,o))\cong J$. 
\end{corollary}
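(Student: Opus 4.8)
The plan is to read this off directly from Proposition \ref{thm3}, so the argument is essentially bookkeeping about what ``bijective'' and ``$\cong$'' mean here. First I would note that the hypothesis $r \ge c$ covers the value $r = c$ as well (since $c \ge c$), so Proposition \ref{thm3} applies simultaneously to the $\delta$-normalized embeddings $\varphi_r$ and $\varphi_c$: each is injective, each has Zariski-closed image in $J$, and moreover each is \emph{bijective}. The key observation is that in the definition \eqref{phai} the codomain of $\varphi_r$ is taken to be $J$ itself, so bijectivity of $\varphi_r$ for $r \ge c$ means precisely that $\varphi_r(\mathrm{Hilb}^r(X,o)) = J$, and likewise $\varphi_c(\mathrm{Hilb}^{c}(X,o)) = J$. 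Hence the three sets appearing in the statement coincide inside $\mathrm{Gr}(\delta, \overline{\mathcal{O}}/(t^{2\delta}))$.

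The only remaining point is to record this as an isomorphism rather than a mere equality of sets. Since $\varphi_r$ is a morphism of schemes (given functorially by $I \mapsto t^{-r}I/(t^{2\delta})$) whose image is, by Proposition \ref{thm3}, a closed subscheme of $J$, and since this image is all of $J$ while $\varphi_r$ is injective, Pfister and Steenbrink's construction realizes $\varphi_r$ as an isomorphism of $\mathrm{Hilb}^r(X,o)$ onto $J$ for $r \ge c$. Applying this with $r$ and with $c$ and composing the two isomorphisms yields $\varphi_r(\mathrm{Hilb}^r(X,o)) \cong J \cong \varphi_c(\mathrm{Hilb}^{c}(X,o))$, which is the assertion.

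I do not expect any genuine obstacle: the corollary is a formal consequence of the bijectivity clause of Proposition \ref{thm3} together with the identification of the target of $\varphi_r$ with $J$. The one place warranting a line of care is making sure that ``bijective'' in Proposition \ref{thm3} is interpreted as bijective \emph{onto $J$} (not merely onto its image), and that the induced reduced/closed-subscheme structures match, so that the set-theoretic equalities upgrade to the stated isomorphisms.
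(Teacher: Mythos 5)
Your proposal is correct and follows the same route the paper intends: the corollary is an immediate consequence of the bijectivity clause of Proposition~\ref{thm3}, since for $r\ge c$ (and in particular for $r=c$) the map $\varphi_r$ is a bijection onto $J$, so all three objects coincide. The paper gives no separate argument beyond this, and your extra care about ``bijective onto $J$'' versus ``onto the image'' is exactly the right point to check.
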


\begin{remark}
The stability in Corollary\,{\rm \ref{stable}} was generalized to the case of multi--branch curve singularities in {\rm \cite{BRV}}.  
\end{remark}

Let $S$ be the semigroup of $\mathcal{O}$.  
For any ideal $I$ of $\mathcal{O}$, the order set $S(I):=\{\text{ord}_t(f)|\,f\in I\}$ is a $S$-semimodule. 
It is known that the punctual Hilbert scheme $\mathrm{Hilb}^r (X,o)$ is decomposed in terms of $S$-semimodules $\Delta$.

\begin{proposition}[\cite{SW1}, Proposition\,6]\label{decomposition lemma}
For the semigroup $S$ of $\mathcal{O}$, we have
\begin{equation}\label{decomposition}
\mathrm{Hilb}^r (X,o)=\bigsqcup_{\Delta\in\mathrm{Mod}_{r}(S)} \mathcal{I}(\Delta)
\end{equation}
where $\mathcal{I}(\Delta):=\{I|\,\text{$I$ is an ideal of $\mathcal{O}$ with $S(I)=\Delta$}\}$. 
\end{proposition}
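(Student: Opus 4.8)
The plan is to reduce the decomposition to the single colength identity
\[
\dim_{\mathbb{C}}(\mathcal{O}/I)=\#(S\setminus S(I))=\mathrm{codim}(S(I))\qquad\text{for every nonzero ideal }I\subseteq\mathcal{O}.
\]
Granting this, the statement follows formally. As recalled just above the proposition, $S(I)$ is an $S$-semimodule for every ideal $I$, and it is obviously determined by $I$, so the assignment $I\mapsto S(I)$ partitions the set of all ideals of $\mathcal{O}$ as a genuine disjoint union $\bigsqcup_{\Delta\in\mathrm{Mod}(S)}\mathcal{I}(\Delta)$. Intersecting with $\mathrm{Hilb}^{r}(X,o)=\{I\mid\dim_{\mathbb{C}}\mathcal{O}/I=r\}$ and using the identity, an ideal $I$ lies in $\mathrm{Hilb}^{r}(X,o)$ if and only if $\mathrm{codim}(S(I))=r$, i.e. if and only if $S(I)\in\mathrm{Mod}_{r}(S)$; this is exactly \eqref{decomposition}. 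Each stratum is moreover nonempty: for $\Delta\in\mathrm{Mod}(S)$ the $\mathbb{C}$-span $I_{\Delta}$ of $\{t^{d}\mid d\in\Delta\}$ is an ideal of $\mathcal{O}$, since $\mathcal{O}$ has $\mathbb{C}$-basis $\{t^{s}\mid s\in S\}$ and $\Delta+S\subseteq\Delta$, and $S(I_{\Delta})=\Delta$ — this makes the index set natural, though it is not needed for the set-theoretic decomposition.

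To prove the identity I would use standard monomials for the valuation $v:=\mathrm{ord}_{t}$. First, $\dim_{\mathbb{C}}(\mathcal{O}/I)<\infty$: choosing $f\in I$ with $v(f)=n$ and writing $f=t^{n}u$ with $u$ a unit of $\mathbb{C}[[t]]$, the conductor inclusion $t^{c}\mathbb{C}[[t]]\subseteq\mathcal{O}$ gives $t^{n+c}\mathbb{C}[[t]]=f\cdot t^{c}\mathbb{C}[[t]]\subseteq I$, and $\mathcal{O}/t^{n+c}\mathbb{C}[[t]]$ is spanned by the finitely many $t^{s}$ with $s\in S$, $s<n+c$. Next, linear independence: if a finite combination $\sum_{s\in T}c_{s}t^{s}$ with $\emptyset\neq T\subseteq S\setminus S(I)$ and all $c_{s}\neq0$ belonged to $I$, it would be a nonzero element of $I$ of valuation $\min T\in S\setminus S(I)$, contradicting $v(I\setminus\{0\})\subseteq S(I)$. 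Hence $\{t^{s}+I\mid s\in S\setminus S(I)\}$ is $\mathbb{C}$-linearly independent in $\mathcal{O}/I$, so $\dim_{\mathbb{C}}(\mathcal{O}/I)\ge\#(S\setminus S(I))$; in particular $S\setminus S(I)$ is finite.

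For the opposite inequality I would run the usual division/reduction algorithm, made a finite process via the conductor. Fix $N\ge c$ with $t^{N}\mathbb{C}[[t]]\subseteq I$ and, for each $s\in S(I)$, fix $f_{s}\in I$ with $v(f_{s})=s$; modulo $I$ everything may be computed in the finite-dimensional quotient $\mathcal{O}/t^{N}\mathbb{C}[[t]]$. Given $f\in\mathcal{O}$, as long as its leading exponent $s_{1}=v(f)$ is $<N$ we do one of two things: if $s_{1}\in S(I)$, subtract the scalar multiple of $f_{s_{1}}$ that cancels the leading term, strictly raising $v$; if $s_{1}\in S\setminus S(I)$, move the leading term into the $\mathbb{C}$-span of $\{t^{s}\mid s\in S\setminus S(I)\}$ and continue, again strictly raising $v$. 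Since $v$ strictly increases and is capped at $N$, the algorithm terminates and writes $f$ modulo $I$ as a $\mathbb{C}$-combination of the $t^{s}$, $s\in S\setminus S(I)$. Thus these classes span $\mathcal{O}/I$, so $\dim_{\mathbb{C}}(\mathcal{O}/I)\le\#(S\setminus S(I))$, and together with the previous paragraph the identity — hence \eqref{decomposition} — follows.

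The only real obstacle is a bookkeeping one: guaranteeing that the reduction algorithm \emph{terminates}, even though $\mathcal{O}$ is a power series ring in which successive leading-term cancellations are a priori an infinite $t$-adic process. This is precisely where the conductor inclusion $t^{c}\mathbb{C}[[t]]\subseteq\mathcal{O}$ (equivalently, that all sufficiently large integers lie in $S$) is used: it lets one truncate modulo $t^{N}$ and work in a finite-dimensional quotient, turning the $t$-adic limit into an honest terminating computation. All remaining ingredients — that $S(I)$ is an $S$-semimodule, the uniqueness of $S(I)$, and the two one-line estimates above — are routine.
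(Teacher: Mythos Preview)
Your argument is correct. The colength identity $\dim_{\mathbb{C}}(\mathcal{O}/I)=\#(S\setminus S(I))$ is exactly what is needed, and your proof of it---linear independence of the monomials $t^{s}$ with $s\in S\setminus S(I)$ modulo $I$, together with the reduction algorithm made finite by the conductor inclusion $t^{c}\,\overline{\mathcal{O}}\subseteq\mathcal{O}$---is sound. The only place to be slightly careful is that the chosen $f_{s}\in I$ are power series rather than polynomials, so after each subtraction you should again truncate modulo $t^{N}\overline{\mathcal{O}}\subseteq I$ to keep working with a polynomial representative; you essentially say this, and with that bookkeeping the termination is clean.

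There is nothing to compare against in the present paper: the proposition is quoted from \cite{SW1} and not reproved here. Your aside on nonemptiness of each stratum is precisely the content of Lemma~\ref{not empty}, which the paper states and proves separately (and only for monomial curve singularities, as the subsequent remark shows nonemptiness can fail otherwise); as you note, it is not needed for the set-theoretic decomposition itself.
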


We call $\mathcal{I}(\Delta)$ the \emph{$\Delta$-subset} for $\Delta$. 
\begin{lemma}\label{not empty}
If $(X,o)$ is a monomial curve singularity, then any $\Delta$-subset $\mathcal{I}(\Delta)$ 
is non-empty. 
\end{lemma}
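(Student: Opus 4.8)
The plan is to construct, for any given $S$-semimodule $\Delta$ with $\mathrm{codim}(\Delta)=r$, an explicit ideal $I$ of $\mathcal{O}=\mathbb{C}[[t^{a_1},\dots,t^{a_p}]]$ whose order set $S(I)$ is exactly $\Delta$; this shows $\mathcal{I}(\Delta)\neq\emptyset$. The natural candidate is the monomial ideal $I_\Delta:=\sum_{s\in\Delta}\mathbb{C}\cdot t^{s}$, i.e. the $\mathbb{C}$-span of $\{t^s : s\in\Delta\}$ inside $\overline{\mathcal{O}}=\mathbb{C}[[t]]$. Since every element of $\mathcal{O}$ is a power series supported on $S$, the support condition $\Delta+S\subseteq\Delta$ guarantees that $I_\Delta$ is closed under multiplication by $\mathcal{O}$, hence is an $\mathcal{O}$-submodule of $\overline{\mathcal{O}}$.

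First I would check that $I_\Delta$ is genuinely an ideal of $\mathcal{O}$, not merely an $\mathcal{O}$-submodule of the larger ring $\overline{\mathcal{O}}$. Because $c=F(S)+1$ belongs to $S$ and every integer $\ge c$ lies in $S$, the semimodule $\Delta$ contains all sufficiently large integers; thus $t^c\overline{\mathcal{O}}\subseteq I_\Delta$, and in particular $I_\Delta$ is contained in $\overline{\mathcal{O}}$ and contains a power of the maximal ideal, so $I_\Delta$ is a fractional ideal. If one prefers an honest ideal of $\mathcal{O}$, pick any $s_0\in\Delta$ and replace $\Delta$ by its shift $\Delta-s_0\subseteq S$ (still an $S$-semimodule of the same codimension); then $I_{\Delta-s_0}=t^{-s_0}I_\Delta\subseteq\mathcal{O}$ is an ideal of $\mathcal{O}$, and shifting changes neither the codimension nor membership in a $\Delta$-subset in the sense relevant here. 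Either way, $I_\Delta$ (or its shift) is an ideal of the required type.

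Next I would verify $S(I_\Delta)=\Delta$. The inclusion $\Delta\subseteq S(I_\Delta)$ is immediate: $t^s\in I_\Delta$ has order $s$ for each $s\in\Delta$. For the reverse inclusion, any $f\in I_\Delta$ is by construction a convergent $\mathbb{C}$-linear combination $\sum_{s\in\Delta}c_s t^s$, so $\mathrm{ord}_t(f)=\min\{s\in\Delta : c_s\neq 0\}\in\Delta$. Hence $S(I_\Delta)\subseteq\Delta$, giving equality. Combined with Proposition\,\ref{decomposition lemma}, which tells us $\mathrm{Hilb}^r(X,o)$ decomposes over exactly the $\Delta\in\mathrm{Mod}_r(S)$, this places $I_\Delta$ in $\mathcal{I}(\Delta)$ and proves the lemma.

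I expect the only subtle point — and the one I would state carefully rather than wave at — is the bookkeeping around whether we want ideals of $\mathcal{O}$ or fractional ideals / $\mathcal{O}$-submodules of $\overline{\mathcal{O}}$, since the decomposition in Proposition\,\ref{decomposition lemma} and the $\delta$-normalized embedding $\varphi_r$ are phrased in terms of ideals $I\subseteq\mathcal{O}$ with $\dim_{\mathbb{C}}\mathcal{O}/I=r$, whereas $S(I)=\Delta$ forces $\Delta\subseteq S$; the shift argument above reconciles this, and the monomial nature of $\mathcal{O}$ is exactly what makes the span $\sum_{s\in\Delta}\mathbb{C}t^s$ an $\mathcal{O}$-module in the first place (this is where the hypothesis that $(X,o)$ is a \emph{monomial} curve singularity is used, and where a non-monomial $\mathcal{O}$ could fail to contain such a clean monomial ideal realizing $\Delta$).
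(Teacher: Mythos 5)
Your construction is correct and is essentially the paper's own argument: the (closed) span of the monomials $t^{s}$, $s\in\Delta$, coincides with the ideal of $\mathcal{O}$ generated by the monomials $t^{\alpha_1},\dots,t^{\alpha_q}$ attached to a system of generators of $\Delta$, which is exactly the monomial ideal the paper exhibits, and in both cases $S(I)=\Delta$ follows since every element is a power series supported on $\Delta$. The only superfluous part is your shifting discussion: the paper defines an $S$-semimodule as a subset $\Delta\subseteq S$, so $I_\Delta$ is automatically an honest ideal of $\mathcal{O}$ and no normalization is needed.
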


\begin{proof}
Let $(X,o)$ be a monomial curve singularity with its local ring $\mathcal{O}=\mathbb{C}[[t^{a_1},\cdots,t^{a_p}]]$. 
The numerical semigroup $S$ of $(X,o)$ is given by $S=\langle a_1,\cdots,a_p\rangle$. 
Let $\Delta=\langle \alpha_1,\cdots,\alpha_q\rangle_S$ be an element of $\mathrm{Mod}(S)$. 
For a generator $\alpha_i$ of $\Delta$, there exist $\lambda_{ij}\in\mathbb{N}_0$ $(i=1,\ldots,q)$
such that $\alpha_i=\sum_{j=1}^p\lambda_{ij}a_j$. 
Since the ideal $I$ of $\mathcal{O}$  generated by
$
t^{\sum_{j=1}^p \lambda_{ij}a_j}$ $(i=1,\ldots,q)
$
satisfies $S(I)=\Delta$. 
Hence we have $\mathcal{I}(\Delta)\neq \emptyset$. 
\end{proof}

\begin{remark}
In general, Lemma\,{\rm \ref{not empty}} is not true. 
For the plane curve singularity with the Puiseux characteristic $(4,2q,s)$, 
there exist $\Delta$-subsets that are empty $($see Theorem\,$13$ in {\rm \cite{P})}.
\end{remark}

\section{Proof of the main theorem}\label{proof}

In this section, we first prove some properties of numerical semigroups and their semimodules. 
Applying them, we finally prove Theorem\,\ref{Main thm}. 

\begin{lemma}\label{proposition r r+1}
Let $\Delta=\langle \alpha_1,\cdots,\alpha_q\rangle_{S}$  be a $S$-semimodule.  
If  $\Delta$ is an element of $\mathrm{Mod_r(S)}$ $(r\in \mathbb{N}_0)$, then 
$\Delta\setminus \{\alpha_i\}$ is an element of  $\mathrm{Mod_{r+1}(S)}$. 
Conversely, if  $\Delta$ is an element of  $\mathrm{Mod_{r+1}(S)}$, then 
 there exist at least one element $s$ of  $S\setminus \Delta$ such that $\Delta\cup \{s \}$ is an element of $\mathrm{Mod_r(S)}$.
\end{lemma}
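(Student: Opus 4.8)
The plan is to prove both directions by carefully controlling which elements of $S$ can be removed from or added to a semimodule while keeping the defining relation $\Delta + S \subseteq \Delta$.

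\medskip

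\noindent\textbf{First direction (removing a generator).} Suppose $\Delta \in \mathrm{Mod}_r(S)$ with minimal generating set $\{\alpha_1,\dots,\alpha_q\}$, and fix a generator $\alpha_i$. I would set $\Delta' := \Delta \setminus \{\alpha_i\}$ and verify two things: first, that $\alpha_i \notin \Delta'$ genuinely drops the codimension by exactly one, i.e.\ $\#(S \setminus \Delta') = \#(S \setminus \Delta) + 1$; this is immediate provided $\alpha_i \in S$, which holds since $\Delta \subseteq S$. Second, and this is the only real content here, I must check that $\Delta'$ is still an $S$-semimodule, i.e.\ $\Delta' + S \subseteq \Delta'$. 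The point is that $\alpha_i$ cannot be written as $\beta + s$ with $\beta \in \Delta$ and $s \in S \setminus \{0\}$: if it could, then $\beta + S \subseteq \Delta$ would already contain $\alpha_i$ together with the rest of $\alpha_i + S$ — more precisely, since $\beta \in \Delta = \bigcup_j (\alpha_j + S)$, we would have $\beta = \alpha_j + s'$ for some $j$ and $s' \in S$, whence $\alpha_i = \alpha_j + (s' + s)$ with $s' + s \neq 0$, contradicting the minimality of the generating set (the element $\alpha_i$ would be redundant). Hence no element of $\Delta' + (S\setminus\{0\})$ equals $\alpha_i$, so $\Delta' + S \subseteq (\Delta + S) \setminus \{\alpha_i\} \subseteq \Delta \setminus \{\alpha_i\} = \Delta'$, using also that $\Delta' + 0 = \Delta' \subseteq \Delta'$. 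Therefore $\Delta' \in \mathrm{Mod}_{r+1}(S)$.

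\medskip

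\noindent\textbf{Second direction (adding an element).} Now suppose $\Delta \in \mathrm{Mod}_{r+1}(S)$, so $S \setminus \Delta$ is nonempty (indeed has $r+1 \ge 1$ elements). I want to find $s \in S \setminus \Delta$ with $\Delta \cup \{s\} \in \mathrm{Mod}_r(S)$. The natural candidate is a \emph{maximal} element of the finite set $S \setminus \Delta$ — or, to be safe, I would take $s$ to be an element of $S \setminus \Delta$ that is maximal with respect to the partial order ``$x \le y$ iff $y - x \in S$'' restricted to $S \setminus \Delta$; equivalently, $s \in S \setminus \Delta$ such that $s + s' \in \Delta$ for every $s' \in S \setminus \{0\}$. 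Such an $s$ exists: start with any element of $S \setminus \Delta$ and keep adding elements of $S$ as long as the result stays outside $\Delta$; this process terminates because $S \setminus \Delta$ is finite. Setting $\Delta'' := \Delta \cup \{s\}$, the codimension drops by exactly one since $s \notin \Delta$, so $\#(S \setminus \Delta'') = r$. It remains to check $\Delta'' + S \subseteq \Delta''$: for $\beta \in \Delta$ we have $\beta + S \subseteq \Delta \subseteq \Delta''$; for the new element, $s + 0 = s \in \Delta''$ and, by the choice of $s$, $s + s' \in \Delta \subseteq \Delta''$ for all $s' \in S \setminus \{0\}$. Hence $\Delta'' \in \mathrm{Mod}_r(S)$.

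\medskip

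\noindent I expect the main obstacle to be the careful argument in the first direction that removing the generator $\alpha_i$ does not break the semimodule property — precisely the claim that $\alpha_i \notin \Delta' + (S \setminus \{0\})$, which is exactly where the minimality of the generating set is used. The second direction is essentially a finiteness/maximality argument and should go through routinely once the right element $s$ is selected. A minor bookkeeping point throughout is to keep track of the distinction between $\Delta$ as a subset of $S$ and the associated numerical semigroup $\widetilde\Delta$, but that does not enter this particular lemma.
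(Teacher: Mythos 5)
Your proof is correct and takes essentially the same route as the paper: the first direction is a direct verification that deleting a generator preserves the semimodule property (you spell out the minimality argument that the paper dismisses as ``easy to show''), and the second direction adjoins a maximal element of $S\setminus\Delta$, where the paper simply takes $\alpha=\max\{s\in S\mid s\notin\Delta\}$, a special case of your order-maximal choice. Your explicit appeal to minimality of the generating set is exactly the hypothesis the paper leaves implicit in the lemma statement (and assumes later, in the proof of the monotonicity proposition), and it is genuinely needed, so nothing is missing.
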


\begin{proof}
If $\Delta=\langle \alpha_1,\cdots,\alpha_q\rangle_{S}$ is an element of $\mathrm{Mod_r(S)}$, then it is easy to show that $\Delta\setminus \{\alpha_i\}$ is a  $S$-semimodule for any $\alpha_i$.  
Furthermore, since $\mathrm{codim} (\Delta\setminus \{\alpha_i\})=\#\{S \setminus (\Delta\setminus \{\alpha_i\})\}=r+1$. 
we have $\Delta\setminus \{\alpha_i\}\in \mathrm{Mod_{r+1}(S)}$. 
For  $\Delta\in\mathrm{Mod}_{r+1}(S)$, let $\alpha:=\max\{s\in S\,|\,s\notin \Delta\}$. 
It is obvious that $\Delta\cup \{\alpha\}$ belongs to $\mathrm{Mod}_r(S)$. 
Note that $\alpha$ is one of the generators of the new $S$-semimodule $\Delta\cup \{\alpha\}$. 
\end{proof}


\begin{proposition}\label{monotonically increase}
For $r\in\mathbb{N}_0$, the relation $\# \mathrm{Mod}_r(S)\le \# \mathrm{Mod}_{r+1}(S)$ holds.
\end{proposition}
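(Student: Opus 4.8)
The first part of Lemma~\ref{proposition r r+1} already gives, for each choice of a minimal generator $\alpha$ of $\Delta$, a well-defined map $\mathrm{Mod}_r(S)\to\mathrm{Mod}_{r+1}(S)$, $\Delta\mapsto\Delta\setminus\{\alpha(\Delta)\}$; so the plan is to choose $\alpha(\Delta)$ canonically so that this map becomes \emph{injective}, whence $\#\mathrm{Mod}_r(S)\le\#\mathrm{Mod}_{r+1}(S)$ is immediate. The reconstruction problem is governed by the converse part of Lemma~\ref{proposition r r+1}: given $\Delta'\in\mathrm{Mod}_{r+1}(S)$, the elements $s$ with $\Delta'\cup\{s\}\in\mathrm{Mod}_r(S)$ are exactly the $s\in S\setminus\Delta'$ all of whose proper $\le_S$-successors in $S$ lie in $\Delta'$ — equivalently, the $\le_S$-maximal elements of $S\setminus\Delta'$, where $a\le_S b$ means $b-a\in S$ — and these form an antichain always containing $\max(S\setminus\Delta')$. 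Injectivity of $\Delta\mapsto\Delta\setminus\{\alpha(\Delta)\}$ amounts to being able to single out, among these ``fillable'' positions of $\Delta'$, the one that produced $\Delta$.

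First I would record that the obvious candidates — ``remove the largest minimal generator'' and ``remove the smallest minimal generator'' — do \emph{not} work (both collapse two of the three codimension-$2$ semimodules of $\langle 3,4,5\rangle$), so the rule must be finer. A natural attempt is to remove the minimal generator $\alpha$ of $\Delta$ which, in $\Delta\setminus\{\alpha\}$, is the extremal (say, smallest-valued) $\le_S$-maximal fillable position, and to reconstruct it from $\Delta'$ as the corresponding extremal maximal element of $S\setminus\Delta'$; the work is to prove such an $\alpha$ exists and is unique, which is where the Ap\'ery-set and conductor structure of $S$ must be used. If a clean rule proves elusive, the fallback is to abandon canonicity and instead verify Hall's condition for the bipartite ``remove-a-generator'' graph between $\mathrm{Mod}_r(S)$ and $\mathrm{Mod}_{r+1}(S)$: for every $\mathcal D\subseteq\mathrm{Mod}_r(S)$ its set of down-neighbours in $\mathrm{Mod}_{r+1}(S)$ has size at least $\#\mathcal D$. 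Here the converse part of Lemma~\ref{proposition r r+1} immediately settles the extreme case $\mathcal D=\mathrm{Mod}_r(S)$ (every $\Delta'$ has an in-neighbour), and the general case is the heart of the matter — this is the main obstacle.

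A geometric route, in the spirit of the rest of the paper, is to pass to the punctual Hilbert scheme of the monomial curve $(X,o)$ with semigroup $S$: by Proposition~\ref{decomposition lemma}, Lemma~\ref{not empty}, and the fact that each cell $\mathcal I(\Delta)$ is an affine space, $\#\mathrm{Mod}_r(S)=\chi(\mathrm{Hilb}^r(X,o))$, reducing the claim to $\chi(\mathrm{Hilb}^r(X,o))\le\chi(\mathrm{Hilb}^{r+1}(X,o))$; the incidence variety $Z_r=\{(I,J):J\subsetneq I,\ \dim_{\mathbb C}\mathcal O/I=r,\ \dim_{\mathbb C}\mathcal O/J=r+1\}$ with its projective-space fibrations onto $\mathrm{Hilb}^r(X,o)$ (fibre $\mathbb{P}((I/\mathfrak mI)^{\vee})$) and onto $\mathrm{Hilb}^{r+1}(X,o)$ (fibre $\mathbb{P}(\operatorname{soc}(\mathcal O/J))$) is the natural device, but arranging the Euler-characteristic bookkeeping so that it lands on the correct side of the inequality is itself delicate, and is the crux of that route as well. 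In any case, for $r\ge c=c(S)$ the inequality is an equality: by Corollary~\ref{stable} all $\mathrm{Hilb}^r(X,o)$ with $r\ge c$ are isomorphic, so $\#\mathrm{Mod}_r(S)$ is constant for $r\ge c$, and this also handles the stabilization needed for Theorem~\ref{Main theorem}.
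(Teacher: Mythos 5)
Your proposal does not actually prove the inequality: each of the three routes you sketch stops exactly at the decisive step, and you say so yourself. The canonical-injection strategy is only a program — you correctly observe (with the $\langle 3,4,5\rangle$ example) that the naive rules fail, but the refined rule ``remove the generator that becomes the extremal fillable position'' is neither shown to exist nor shown to be recoverable from $\Delta'$, so no injection $\mathrm{Mod}_r(S)\to\mathrm{Mod}_{r+1}(S)$ is ever produced. The Hall-condition fallback is verified only in the trivial extreme case $\mathcal{D}=\mathrm{Mod}_r(S)$ (which is just the converse half of Lemma~\ref{proposition r r+1}); the general matching condition, which is the whole content of the proposition, is left as ``the heart of the matter.'' The geometric route reduces the statement to $\chi(\mathrm{Hilb}^r(X,o))\le\chi(\mathrm{Hilb}^{r+1}(X,o))$ via the decomposition of Proposition~\ref{decomposition lemma} and Lemma~\ref{not empty}, but that Euler-characteristic inequality is again exactly the open point (and your parenthetical claim that each $\mathcal{I}(\Delta)$ is an affine space would itself need justification; what one really uses is the $\mathbb{C}^*$-action with one monomial fixed point per $\Delta$). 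The only part you complete — constancy of $\#\mathrm{Mod}_r(S)$ for $r\ge c$ via Corollary~\ref{stable} — is the stabilization half of Theorem~\ref{Main theorem}, not the monotonicity asserted in Proposition~\ref{monotonically increase}, which is needed precisely in the range $r<c$.

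For comparison, the paper's own argument for this proposition is purely combinatorial and avoids both a canonical injection and any geometry: writing each $\Delta_{ri}\in\mathrm{Mod}_r(S)$ with a minimal system of generators, it uses Lemma~\ref{proposition r r+1} to express $\mathrm{Mod}_{r+1}(S)$ as the union of the families $\{\Delta_{ri}\setminus\{\alpha_{ij}\}\}_j$, and then argues (splitting into the cases where two parents share a minimal generator or not, the disjoint case using minimality of the generating system) that distinct parents give distinct families, concluding by induction that the union has at least $N_r$ elements; the Hilbert-scheme input (Proposition~\ref{thm3}, Corollary~\ref{stable}, Lemma~\ref{not empty}) is reserved for the stabilization step in the proof of Theorem~\ref{Main theorem}, just as in your last paragraph. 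So the gap in your proposal is the absence of any completed argument for $\#\mathrm{Mod}_r(S)\le\#\mathrm{Mod}_{r+1}(S)$ when $r<c$; some version of a counting or matching argument at the level of generators (as in the paper) still has to be carried out.
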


\begin{proof}
Note that an element of $\mathrm{Mod}_{r}(S)$ yields  some elements of $\mathrm{Mod}_{r+1}(S)$ and an element of $\mathrm{Mod}_{r+1}(S)$ is yielded from at least one element of $\mathrm{Mod}_{r}(S)$ by Lemma\,{\rm \ref{proposition r r+1}}. 
Write $\mathrm{Mod}_{r}(S)=\{\Delta_{r1},\ldots,\Delta_{rN_r}\}$ where $\Delta_{ri}=\langle \alpha_{i1}\ldots\alpha_{iq_i}\rangle_S$. 
We assume that  any system of generators $\{\alpha_{i1}\ldots\alpha_{iq_i}\}$ is minimal. 
We infer from Lemma\,{\rm \ref{proposition r r+1}} that

\begin{equation}\label{r+1}
\mathrm{Mod}_{r+1}(S)= \bigcup_{i=1}^{N_r}\{\Delta_{ri}\setminus \{\alpha_{ij}\}|\, j=1,\dots,q_i\}
\end{equation}
where $\{\Delta_{ri}\setminus \{\alpha_{ij}\}|\, j=1,\dots,q_i\}\neq\emptyset$ for any $i$. 

To prove our assertion, it is enough to show the following fact: 
for any two distinct elements of $\mathrm{Mod}_{r}(S)$,  
$\Delta_{rk}=\langle \alpha_{k1},\cdots,\alpha_{kq_k}\rangle_{S}$ and  
$\Delta_{rl}=\langle \alpha_{l1},\cdots,\alpha_{lq_l}\rangle_{S}$ where $1\le k<l\le N_r$, we have
\begin{equation}\label{cardinality}
\{\Delta_{rk}\setminus \{\alpha_{kj}\}|\, j=1,\dots,q_k\}\neq\{\Delta_{rl}\setminus \{\alpha_{lj}\}|\, j=1,\dots,q_l\}.
\end{equation}
If the relation\,(\ref{cardinality}) holds, then, by mathematical induction, we can show that
\begin{equation*}
\#\mathrm{Mod}_{r+1}(S)=\# \bigcup_{i=1}^{N_r}\{\Delta_{ri}\setminus \{\alpha_{ij}\}|\, j=1,\dots,q_i\} \ge N_r=\#\mathrm{Mod}_{r}(S).
\end{equation*}

To prove (\ref{cardinality}), we consider the following two cases:

\noindent
(Case 1): $\{ \alpha_{k1},\cdots,\alpha_{kq_k}\}\cap\{ \alpha_{l1},\cdots,\alpha_{lq_l}\}\neq \emptyset$. 
 In this case,  it is obvious that $\Delta_{rk}\setminus\{\alpha\}\neq\Delta_{rl}\setminus\{\alpha\}$
for any $\alpha\in\{ \alpha_{k1},\cdots,\alpha_{kq_k}\}\cap\{ \alpha_{l1},\cdots,\alpha_{lq_l}\}$. 
Furthermore, $\Delta_{rk}\setminus\{\alpha\}$ and $\Delta_{rl}\setminus\{\alpha\}$ are elements of $\mathrm{Mod}_{r+1}(S)$ by Lemma\,{\rm \ref{proposition r r+1}}. 
So the relation\,(\ref{cardinality}) holds.  

\noindent
(Case 2): $\{ \alpha_{k1},\cdots,\alpha_{kq_k}\}\cap\{ \alpha_{l1},\cdots,\alpha_{lq_l}\}= \emptyset$. 
We see that $\Delta_{rk}\setminus\{\alpha_{km}\}\neq\Delta_{rl}\setminus\{\alpha_{ln}\}$ for any $m,n$. 
Indeed, if $\Delta_{rk}\setminus\{\alpha_{km}\}=\Delta_{rl}\setminus\{\alpha_{ln}\}$ holds for some $m$ and $n$, 
then we have $\Delta_{rk}=(\Delta_{rl}\setminus\{\alpha_{ln}\})\cup\{\alpha_{km}\}$. 
For $\alpha_{ku}$ with $u\neq m$ and $1\le u \le q_k$, there exist $\alpha_{lv}$ with $v\neq n$ and $1\le v \le q_l$ and 
$s_v\in S$ such that $\alpha_{ku}=\alpha_{lv}+s_v$.
Similarliy, for this $\alpha_{lv}$, there exist $\alpha_{kw}$ with $w\neq m$ and $1\le w \le q_k$ and 
$s_w\in S$ such that $\alpha_{lv}=\alpha_{kw}+s_w$. 
So we obtain $\alpha_{ku}=\alpha_{kw}+s_w+s_v$.
However, it contradicts the minimality of the system of generators for $\Delta_{rk}$. 
Again the relation (\ref{cardinality}) holds.

\noindent
Our assertion is proved. 
\end{proof}

\noindent
\textbf{Proof of Theorem\,\ref{Main theorem}.} 
Note that if $r\in \mathbb{N}$, then $g(\widetilde{\Delta})-g(S)=\mathrm{codim}(\Delta)-1$ holds 
for any $\Delta\in \mathrm{Mod}_r(S)$.  
So $\mathcal{J}(S,r)$ can be expressed as $\mathcal{J}(S,r)=\{\widetilde{\Delta}|\,\Delta\in\mathrm{Mod}_{r+1}(S)\}$. 
This fact implies that $\# \mathcal{J}(S,r)=\#\mathrm{Mod}_{r+1}(S)$ for $r\in \mathbb{N}_0$. 
It follows from Proposition\,\ref{monotonically increase} that 
$\#\mathcal{J}(S,r)=\# \mathrm{Mod}_{r+1}(S)\le \# \mathrm{Mod}_{r+2}(S)=\#\mathcal{J}(S,r+1)$ for $r\in\mathbb{N}_0$. 
Namely, the sequence $\{\#\mathcal{J}(S,r)\}_{r\in \mathbb{N}_0}$ weakly increases. 

Next, we show that 
$\{\#\mathcal{J}(S,r)\}_{r\in \mathbb{N}_0}$ eventually stabilizes. 
For a numerical semigroup $S$, we set $\mathbb{C}[[S]]:=\mathbb{C}[[t^i\,|\,i\in S]]$ and 
consider the monomial curve singularity $(X,o)$ whose local ring $\mathcal{O}$ is $\mathbb{C}[[S]]$. 
It is obvious that the semigroup of $\mathcal{O}$ is $S$. 

If $r\ge c$, then the relation
\begin{equation}\label{equality}
\varphi_r(\mathrm{Hilb}^r(X,o))=J=\varphi_{c}(\mathrm{Hilb}^{c}(X,o))
\end{equation}
holds by Corollary\,\ref{stable}.  
Furthermore, writing $\mathrm{Mod}_{i}(S)=\{\Delta_{i1},\ldots,\Delta_{iN_i}\}$ where $i=r,c$, 
the both sides of the relation (\ref{equality}) can be rewritten as 
\begin{equation}\label{equality2}
\bigsqcup_{j=1}^{N_r} \varphi_r\big(\mathcal{I}(\Delta_{r,j})\big)
=\bigsqcup_{j=1}^{N_c} \varphi_{c}\big(\mathcal{I}(\Delta_{c,j}))\big) 
\end{equation}
by  Proposition\,\ref{decomposition lemma}. 
Here any  $\mathcal{I}(\Delta_{r,j})$ and $\mathcal{I}(\Delta_{r,j})$ are non-empty by Lemma\,\ref{not empty}. 
Note that, 
for an element $\Delta_{r,k}$ of $\mathrm{Mod}_{r}(S)$, 
there exists a unique element $\Delta_{c,l}$ of $\mathrm{Mod}_{c}(S)$
such that  the relation $\varphi_r\big(\mathcal{I}(\Delta_{r,k})\big)=\varphi_c\big(\mathcal{I}(\Delta_{c,l})\big)$
holds in {\rm(\ref{equality2})}. 
Indeed, since $\varphi_r$ and $\varphi_c$ are bijective by Propostion\,\ref{thm3}, for an ideal $I\in \mathcal{I}(\Delta_{r,k})$,
there exists  a unique $\Delta_{c,l}\in\mathrm{Mod}_{c}(S)$ and a unique element $I'\in \mathcal{I}(\Delta_{c,l})$
such that $\varphi_r(I)=\varphi_c(I')\in\varphi_c\big(\mathcal{I}(\Delta_{c,l})\big)$. 
This fact implies that $S$-modules of $\mathrm{Mod}_{r}(S)$ and $\mathrm{Mod}_{c}(S)$ are in one to one correcepondence. 
That is, $\#\mathrm{Mod}_{r}(S)=\# \mathrm{Mod}_{c}(S)$ holds for $r\ge c$. 
It follows that $\#\mathcal{J}(S,r-1)=\# \mathrm{Mod}_{r}(S)=\# \mathrm{Mod}_{c}(S)= \#\mathcal{J}(S,c-1)= \#\mathcal{J}(S,F(S))$  for $r\ge c$. 
$\square$

\noindent
\small 
Masahiro Watari\\
 University of Kuala Lumpur, Malaysia France Institute\\
Japanese Collaboration Program\\
Section 14, Jalan Damai, Seksyen 14, 43650\\
Bandar Baru Bangi, Selengor, Malaysia.\\
E-mail:masahiro@unikl.edu.my
\end{document}